\newcommand{\RR}{\mathbb{R}}
\newcommand{\NN}{\mathbb{N}}
\newtheorem{theorem}{Theorem}
\newtheorem{lemma}{Lemma}
\newtheorem{prop}{Proposition}
\newtheorem{defi}{Definition}
\newcounter{example}
\newtheorem{remark}{Remark}
\newtheorem{assum}{Assumption}
\title{About identifiability and observability for a class of dynamical systems}
\author[1]{Alicja B. Kubik\thanks{akubik@ucm.es}}
\author[2]{Alain Rapaport\thanks{alain.rapaport@inrae.fr}}
\author[1]{Benjamin Ivorra\thanks{ivorra@ucm.es}}
\author[1]{Ángel M. Ramos\thanks{angel@mat.ucm.es}}
\affil[1]{MOMAT, Instituto de Matem\'atica Interdisciplinar (IMI), Univ. Complutense de Madrid, 28040 Madrid, Spain}
\affil[2]{MISTEA, Univ.
Montpellier, INRAE, Institut Agro, 34060 Montpellier, France}
\date{}
\begin{document}

\maketitle

\begin{center}
\textit{This document is a preprint version.}
\end{center}

\begin{abstract}                          
In this note, we propose a novel approach for a class of autonomous dynamical systems that allows, given some observations of the solutions, to identify its parameters and reconstruct the state vector. This approach relies on proving the linear independence between some functions depending on the observations and its derivatives. In particular, we show that, in some cases, only low-order derivatives are necessary, opposed to classical approaches that need more derivation. We also provide different constructive procedures to retrieve the unknowns, which are based on the resolution of some linear systems. Moreover, under some analyticity conditions, these unknowns may be retrieved with very few data. We finally apply this approach to some illustrative examples.
\end{abstract}

{\small \noindent\textit{Key words:} Deterministic systems; first-order systems; nonlinear systems; methodology; identifiability; identification algorithms; parameter identification; observability; linear independence.}

\section{Introduction}

In the following lines, we consider autonomous systems of ODEs, together with some observations, and will study their identifiability and observability.

When a system is identifiable or observable, we can make use of several techniques (see, for example, \cite{Gauthier1992}, \cite{Goodwin1977}, \cite{Kalman1960}, \cite{Luenberger1966}, \cite{Pronzato1997}) that may help us estimate practically the unknowns with a certain accuracy, but do not guarantee obtaining the exact values. Sometimes, one can treat the system algebraically and try to reconstruct exactly the unknowns in terms of, for example, the derivatives of the data whenever they exist and are known (or can be computed) perfectly (see \cite[Chapter 3]{Rapaport2024}). The basis of this paper is settled on \cite{Rapaport2024}, providing new methodologies not considered before, up to our knowledge.

Consider $f$ is the function that describes the dynamics of the ODE system we are considering and $h(x,\theta)$ is the function that describes the observations in terms of the solution $x\in\RR^n$ and the parameter vector $\theta\in\RR^m$. The usual approach consists on studying the injectivity of the function $\Gamma_r: (x,\theta)\mapsto  (h(x,\theta), L_f h(x,\theta), ..., L_f^{r} h(x,\theta))$, for some $r\in\NN$; in general, several works on differential algebra have been developed for this purpose (see, for instance, \cite{Rapaport2023}, \cite{Jain2019}, \cite{Ovchinnikov2021}, \cite{Saccomani2003}). In particular, when the dynamics are nonlinear, $r$ may be greater than $n+m-1$, and it may be very difficult studying this function. Through this amount of differentiation, some authors focus on expressions which are linear on all the parameters (see \cite{Gevers2016}, \cite{Ljung1990}, \cite{Ljung1994}).

We present some results for a class of systems of ODEs for which we can obtain some linear relations between functions of the parameters and functions of the observed data and its derivatives, usually avoiding the amount of Lie derivatives required in the processes previously described. If the required hypotheses are satisfied, we prove that these systems are identifiable and/or observable. Moreover, we provide constructive ways to recover the unknowns, which are based on solving some linear systems of equations. To do this, the proofs will mainly require linear independence of some sets of functions. This presented approach will be illustrated through some examples in Section \ref{illexmp}.

\section{A general framework} \label{gen_model}

Consider the system
\begin{equation}\label{general_sys}
    \begin{array}{lcl}
        \dot{x}(t;\xi,\theta) & = & f(x(t;\xi,\theta),\theta), \quad x(0;\xi,\theta) = \xi, \\[0.4em]
        y_{(\xi,\theta)}(t) & = & h(x(t;\xi,\theta),\theta),
    \end{array}
\end{equation}
where $f(\cdot,\cdot) : \Omega\times\Theta \rightarrow \RR^n$ is a function of $(x,\theta)$ which is locally Lipschitz-continuous w.r.t. $x \in \Omega \subset \RR^n$ and continuous w.r.t. $\theta \in \Theta \subset \RR^b$; $\theta$ are the constant parameters of the system; $\Omega$ is a positively invariant set with respect to the system of ODEs of System \eqref{general_sys}; $x(\cdot; \xi,\theta): \mathcal{I} \rightarrow \Omega$ denotes the unique solution of the system of ODEs of System \eqref{general_sys} with initial condition $\xi \in \Omega$ and we assume it is globally defined, i.e., $\mathcal{I} = [0,+\infty)$; and the output $y_{(\xi,\theta)}(t)$, $t \in \mathcal{S} \subset \mathcal{I}$, is described by some known function $h(\cdot,\cdot) : \Omega\times\Theta \rightarrow \RR^m$.

We aim to know if, given the output $y_{(x_0,\theta_0)}$, for some $(x_0,\theta_0)$, we can determine univocally this pair $(x_0,\theta_0)$ that produces this output.
 
\begin{defi}[Identifiability in a set]\label{identifiability_set}
    System \eqref{general_sys} is identifiable on $\Theta$ in $\mathcal{S} \subset \mathcal{I}$  with initial conditions in $\Omega$ whether, for any $\xi \in \Omega$, given different $\theta_1, \theta_2 \in \Theta$, there exists some time $t \in  \mathcal{S}$ such that
    $$h(x(t; \xi,\theta_1),\theta_1) \neq h(x(t; \xi,\theta_2),\theta_2).$$
    Equivalently, if $h(x(t; \xi,\theta_1),\theta_1) = h(x(t; \xi,\theta_2),\theta_2)$, for all $t \in \mathcal{S}$ and any $\xi \in \Omega$, implies that $\theta_1 = \theta_2$.
\end{defi}

\begin{defi}[Observability in a set]\label{observability_set}
    System \eqref{general_sys} is observable on $\Omega$ in $\mathcal{S} \subset \mathcal{I}$ with parameters in $\Theta$ whether, for any $\theta \in \Theta$, given different $x_0, x_0' \in \Omega$, there exists some time $t \in \mathcal{S}$ such that
    $$h(x(t; x_0,\theta),\theta) \neq h(x(t; x_0',\theta),\theta).$$
    Equivalently, if $h(x(t; x_0,\theta),\theta) = h(x(t; x_0',\theta),\theta)$, for all $t \in \mathcal{S}$ and any $\theta \in \Theta$, implies that $x_0 = x_0'$.
\end{defi}

System \eqref{general_sys} is identifiable (resp. observable) if Definition \ref{identifiability_set} (resp. Definition \ref{observability_set}) is fulfilled for $\mathcal{S} = \mathcal{I}$.

It is straightforward noticing that, given a set $\mathcal{A} \subset \mathcal{I}$, if a system is identifiable (resp. observable) in a subset $\mathcal{A}_1 \subset \mathcal{A}$, then it is identifiable (resp. observable) in $\mathcal{A}$.

Let us now, given $\theta \in \Theta$, denote $h_{\theta}(\cdot) = h(\cdot,\theta)$ and $f_{\theta}(\cdot) = f(\cdot,\theta)$, and assume $h_{\theta} \in \mathcal{C}^{d}(\Omega; \RR^m)$, $f_{\theta} \in \mathcal{C}^{\max\{0,d-1\}}(\Omega; \RR^n)$, for some $d \in \NN\cup\{0\}$. Then, $y_{(\xi,\theta)} \in \mathcal{C}^{d}(\mathcal{I}; \RR^m)$. In particular,
$$\dot{y}_{(\xi,\theta)}(t) = L^1_{f_{\theta}} h_{\theta}(x(t; \xi,\theta)), \quad \forall\, t\geq 0,$$
where we denote by $L^1_{f_{\theta}}h_{\theta}$ the Lie derivative of $h_{\theta}$ with respect to the vector field $f_{\theta}$. If we continue differentiating $y_{(\xi,\theta)}$, we have
$$y^{(k)}_{(\xi,\theta)}(t) = \dfrac{\mathrm{d}^k}{\mathrm{d} t^k} h(x(t; \xi,\theta)) = L_{f_{\theta}}^{k} h_{\theta}(x(t; \xi,\theta)), \quad \forall\, t \geq 0,\ k \in \{1,\dots,d\},$$
denoting 
$$y^{(k)} = \dfrac{\mathrm{d}^k y}{\mathrm{d} t^k}, \quad k \in \NN, \text{ when } y \in \mathcal{C}^k(\mathcal{I}).$$
Then, denoting $y^{(0)} = y$, we define the map $\mathcal{L}_{f_{\theta},h_{\theta},d} : \Omega \rightarrow \RR^{m\times d}$ as
$$\mathcal{L}_{f_{\theta},h_{\theta},d}(\xi) = \left(y^{(0)}_{(\xi,\theta)}(0), \dots, y^{(d)}_{(\xi,\theta)}(0)\right).$$

In the following, we may denote $y_{(\xi,\theta)}$, $\dot{y}_{(\xi,\theta)}$ and $y^{(k)}_{(\xi,\theta)}$ as $y$, $\dot{y}$ and $y^{(k)}$, respectively, in order to simplify the notation.

Before presenting our main results, let us give some results about linear independence of families of functions, a concept on which our approach is based.

\subsection{About linear independence}

\begin{defi}\label{defi_lindep}
    Given $\mathcal{I} \subset \RR$, functions $\phi_i : \mathcal{I} \rightarrow \RR$, $i \in \{1,\dots,q\}$, $q \in \NN$ are said to be linearly independent if the only constants $a_1, \dots, a_q\in\RR$ such that $a_1\phi_1(t) + \dots + a_q\phi_q(t) = 0$, $\forall\, t \in \mathcal{I},$ are $a_1 = \dots = a_q = 0.$
\end{defi}

\begin{lemma}\label{lindep}
    Let  $\phi_i : \mathcal{S}\subset\RR \rightarrow \RR$, $i \in \{1,\dots,q\}$, $q \in \NN$, $\mathcal{S} \subset \mathcal{I}$. These functions are linearly independent if, and only if, there exist $q$ different times $t_1, \dots, t_{q} \in \mathcal{S}$ such that the matrix $(\phi_i(t_j))_{i,j = 1,\dots,q}$ has full rank.
\end{lemma}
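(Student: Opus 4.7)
The plan is to prove both implications separately, the easy one first and the harder one by either a linear-algebra argument on the image set or by induction on $q$.

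For the backward implication, I would assume that $q$ distinct times $t_1,\dots,t_q \in \mathcal{S}$ exist making $M = (\phi_i(t_j))_{i,j=1,\dots,q}$ invertible, and suppose $a_1\phi_1(t) + \dots + a_q\phi_q(t) = 0$ for every $t \in \mathcal{S}$. Evaluating this identity at each $t_j$ produces the homogeneous linear system $M^\top a = 0$ with $a=(a_1,\dots,a_q)^\top$. Since $M$ has full rank, so does $M^\top$, forcing $a=0$ and hence linear independence. This direction is essentially a one-line matrix manipulation.

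For the forward implication I would consider the set $V = \{(\phi_1(t),\dots,\phi_q(t))^\top : t \in \mathcal{S}\} \subset \RR^q$ and show that its linear span equals all of $\RR^q$. If the span were a proper subspace, there would exist a non-zero vector $a \in \RR^q$ orthogonal to every element of $V$, i.e., $a_1\phi_1(t) + \dots + a_q\phi_q(t) = 0$ for all $t \in \mathcal{S}$, contradicting the assumed linear independence. Hence $V$ spans $\RR^q$, so one can extract $q$ linearly independent vectors from $V$. Since the map $t \mapsto (\phi_1(t),\dots,\phi_q(t))^\top$ is a function, distinct vectors in its image must come from distinct arguments, giving $q$ different times $t_1,\dots,t_q \in \mathcal{S}$ so that the matrix $(\phi_i(t_j))_{i,j}$ has linearly independent columns, i.e., full rank.

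As an alternative for the forward implication I would keep an induction on $q$ in mind, since it yields a constructive argument that fits the paper's algorithmic spirit: assuming the statement for $q-1$ linearly independent functions, I would expand the $q\times q$ determinant
\[
D(t)=\det\bigl(\phi_i(t_j)\bigr)_{i=1,\dots,q;\, j=1,\dots,q-1,\, t_q=t}
\]
along the last column. The cofactor of $\phi_q(t)$ is the $(q-1)\times(q-1)$ determinant given by the induction hypothesis, which is nonzero, so $D$ is a non-trivial linear combination of $\phi_1,\dots,\phi_q$, hence not identically zero on $\mathcal{S}$; any $t_q$ at which $D(t_q)\neq 0$ must differ from $t_1,\dots,t_{q-1}$ (otherwise two columns coincide), completing the induction.

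The only delicate point, and the one on which I would spend the most care, is guaranteeing that the selected times are \emph{pairwise distinct}: in the image-set argument this follows from the functional nature of $t \mapsto (\phi_1(t),\dots,\phi_q(t))$, while in the inductive argument it follows from the vanishing of a determinant with repeated columns. The rest is standard finite-dimensional linear algebra.
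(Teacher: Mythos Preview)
Your proposal is correct. The backward implication is argued exactly as in the paper: evaluate the vanishing linear combination at the $q$ chosen times and use invertibility of the resulting matrix.

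For the forward implication the paper uses only the inductive route, essentially your alternative: it treats $q=2$ as a base case and, at the induction step, fixes $t_1,\dots,t_{q-1}$ from the hypothesis, expands the $q\times q$ determinant with a free $t$ in the remaining row along that row, observes that the cofactor of $\phi_q(t)$ is the nonzero $(q-1)\times(q-1)$ determinant $D_{q-1}$, and concludes that this determinant (as a function of $t$) cannot vanish identically on $\mathcal{S}$ by linear independence. Your primary approach---showing that the image set $V=\{(\phi_1(t),\dots,\phi_q(t))^\top:t\in\mathcal{S}\}$ spans $\RR^q$ via an orthogonality/annihilator argument and then extracting a basis---is a genuinely different and more economical proof: it avoids induction and determinant expansions entirely and works in one stroke for all $q$. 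What the paper's inductive argument buys is constructiveness: it tells you how to \emph{build} the times $t_1,\dots,t_q$ sequentially (pick $t_1$, then find $t_2$ making a $2\times2$ determinant nonzero, etc.), which aligns with the reconstruction procedures used later in Theorem~\ref{det_univocally} and Lemma~\ref{recover_infi_lindep}. Your span argument gives existence cleanly but does not by itself suggest an algorithm for locating the $t_j$. Your observation that distinctness of the $t_j$ is automatic---because distinct vectors in the image of a function come from distinct arguments---is correct and neatly handles the one subtle point.
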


\begin{proof}
    We are going to prove the first implication by induction on the number of linearly independent functions, taking into account Definition \ref{defi_lindep}.

    \noindent\underline{Case $q = 2$:} If $\phi_1$ and $\phi_2$ are linearly independent in $\mathcal{S}$, then, given $t_1 \in \mathcal{S}$ such that $\phi_i(t_1) \neq 0$, for some $i \in \{1,2\}$,
    $$\det \left(\begin{array}{cc}
        \phi_1(t) & \phi_2(t) \\
        \phi_1(t_1) & \phi_2(t_1)
    \end{array}\right) =
    \phi_2(t_1)\phi_1(t) - \phi_1(t_1)\phi_2(t)$$
    is not identically null in $\mathcal{S}$; otherwise, $a_1 = \phi_2(t_1)$ and $a_2 = -\phi_1(t_1)$ would be coefficients, not both null, such that
    $$a_1\phi_1(t) + a_2\phi_2(t) = 0, \quad \forall\, t \in \mathcal{S},$$
    which is in contradiction with $\phi_1$ and $\phi_2$ being linearly independent in $\mathcal{S}$. Hence, there exists some time $t_2 \in \mathcal{S}$ such that
    $$\det \left(\begin{array}{cc}
        \phi_1(t_2) & \phi_2(t_2) \\
        \phi_1(t_1) & \phi_2(t_1)
    \end{array}\right) \neq 0.$$

    \noindent\underline{Induction step:} Let $q \geq 3$. Assume that, given $\phi_1, \dots, \phi_{q-1}$ linearly independent, there exist $t_1, \dots,$ $t_{q-1} \in \mathcal{S}$ such that
    $$D_{q-1} = \det \left((\phi_i(t_j))_{
        \begin{subarray}{l}
            i = 1,\dots,q-1 \\
            j = 1,\dots,q-1
        \end{subarray}
    }\right) \neq 0.$$
    
    If $\phi_{q}$ is a function such that $\phi_1, \dots, \phi_{q}$ are linearly independent, then
    $$\det \left(\begin{array}{cccc}
        \phi_1(t) & \cdots & \phi_{q-1}(t) & \phi_{q}(t) \\
        \phi_1(t_1) & \cdots & \phi_{q-1}(t_1) & \phi_{q}(t_1) \\
        \vdots & \ddots & \vdots & \vdots \\
        \phi_1(t_{q-1}) & \cdots & \phi_{q-1}(t_{q-1}) & \phi_{q}(t_{q-1})
    \end{array}\right) =
    (-1)^{q-1}D_{q-1}\phi_{q}(t) + d_{q-1}\phi_{q-1}(t) + \dots + d_1\phi_1(t)$$
    is not identically null in $\mathcal{S}$, where
    $$d_k = (-1)^{k-1}\det \left((\phi_i(t_j))_{
        \begin{subarray}{l}
            i = 1,\dots,k-1,k+1,\dots,q \\
            j = 1,\dots,q-1
        \end{subarray}
    }\right), \quad k \in \{1,\dots,q-1\}.$$
    Otherwise, since $D_{q-1} \neq 0$, there would exist $q$ coefficients $a_i = d_i, \, i \in \{1,\dots,q-1\}$, $a_q = (-1)^{q-1}D_{q-1}$, not all of them null, such that
    $$\sum_{i=1}^{q} a_i\phi_i(t) = 0, \quad \forall\, t \in \mathcal{S},$$
    which is in contradiction with the fact that $\phi_1, \dots, \phi_{q}$ are linearly independent. Then, there exists some time $t_q \in \mathcal{S}$ such that
    \begin{equation}\label{det_phi_q}
        \det \left((\phi_i(t_j))_{
            \begin{subarray}{l}
                i = 1,\dots,q \\
                j = 1,\dots,q
            \end{subarray}
        }\right) \neq 0,
    \end{equation}
    as we wanted to prove.

   Finally, for the second implication, assume there exist $t_1, \dots, t_q \in \mathcal{S}$ such that \eqref{det_phi_q} is satisfied. If there exist $a_1, \dots, a_q$ such that 
   $$\sum_{i=1}^q a_i\phi_i(t) = 0, \quad \forall\, t \in \mathcal{S},$$
   this would in particular imply that
   $$\sum_{i=1}^q a_i \left(\begin{array}{c}
        \phi_i(t_1)  \\
        \vdots \\
        \phi_i(t_q)
    \end{array}\right) = 0.$$
    But these vectors are linearly independent, given that their determinant is non-null, and hence $a_i = 0$, $\forall\, i \in \{1,\dots,q\}$. Thus, $\phi_1, \dots, \phi_q$ are linearly independent.
\end{proof}

\begin{remark}
    Notice that Lemma \ref{lindep} gives a necessary and sufficient condition for linear independence which requires neither analyticity nor differentiability of $\phi_1, \dots, \phi_q$, as opposed to the classical condition (see \cite{Bocher1900}) that these functions have non-null Wro\'nskian, i.e., that there exists some time $\tilde{t} \in \mathcal{S}$ such that
    \begin{equation*}
        \left.W(t)\right|_{t=\tilde{t}} = \left.\left(\det \left(\dfrac{\mathrm{d}^k \phi_i}{\mathrm{d} t^k}(t)\right)_{
            \begin{subarray}{l}
                i = 1,\dots,q \\
                k = 0,\dots,q-1
            \end{subarray}
        }\right)\right|_{t=\tilde{t}} \neq 0.
    \end{equation*}
\end{remark}

\subsection{Main results}

We recall the classical result on observability based on Lie derivatives (see \cite{Rapaport2024}, \cite{Gauthier1992}, \cite{ORC}), and hence will not provide a proof for Theorem \ref{generalization_obs}.

\begin{theorem}\label{generalization_obs}
    Let $h_{\theta,i} \in \mathcal{C}^{d_i}(\Omega; \RR^m)$, for some $d_i \in \NN\cup\{0\}$, $i \in \{1,\dots,m\}$, $h_{\theta} = (h_{\theta,1}, \dots, h_{\theta,m})$, and $f_{\theta} \in \mathcal{C}^{d-1}(\Omega; \RR^n)$, $d = \max\{1, d_1, \dots, d_m\}$, for any $\theta \in \Theta$. If
    $$\mathcal{L}_{f_{\theta},h_{\theta},\{d_1,\dots,d_m\}} \ : \ \xi \mapsto \left(\mathcal{L}_{f_{\theta},h_{\theta,1},d_1}(\xi), \dots, \mathcal{L}_{f_{\theta},h_{\theta,m},d_m}(\xi)\right)$$
    is injective in $\Omega$, then System \eqref{general_sys} is observable on $\Omega$ in any semi-open interval $[a,b) \subset \mathcal{I}$ with parameters in $\Theta$.
\end{theorem}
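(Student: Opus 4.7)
My plan is to turn the assumed equality of outputs into an equality of Lie derivatives at a single time, invoke the injectivity hypothesis on $\mathcal{L}_{f_\theta,h_\theta,\{d_1,\dots,d_m\}}$ to get an equality of states at that time, and then propagate this back to $t=0$ via uniqueness of the Cauchy problem.

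To start, fix $\theta \in \Theta$ and suppose $x_0,x_0' \in \Omega$ satisfy $h(x(t;x_0,\theta),\theta) = h(x(t;x_0',\theta),\theta)$ for every $t \in [a,b)$. Componentwise this reads $y_{i,(x_0,\theta)}(t) = y_{i,(x_0',\theta)}(t)$ on $[a,b)$ for $i=1,\dots,m$. Under the regularity hypotheses, each $y_{i,(\xi,\theta)}$ lies in $\mathcal{C}^{d_i}(\mathcal{I})$, so differentiating on the open interval $(a,b)$ gives
\[
y^{(k)}_{i,(x_0,\theta)}(t) = y^{(k)}_{i,(x_0',\theta)}(t), \qquad t \in (a,b),\ k \in \{0,\dots,d_i\}.
\]
Using the Lie-derivative identity $y^{(k)}_{i,(\xi,\theta)}(t) = L^k_{f_\theta} h_{\theta,i}(x(t;\xi,\theta))$, which was derived just before the statement, I would pick any $t^\star \in (a,b)$ and read this off as
\[
L^k_{f_\theta} h_{\theta,i}\bigl(x(t^\star;x_0,\theta)\bigr) = L^k_{f_\theta} h_{\theta,i}\bigl(x(t^\star;x_0',\theta)\bigr)
\]
for every $i$ and $k \leq d_i$. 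Stacking all these equalities is exactly the statement that $\mathcal{L}_{f_\theta,h_\theta,\{d_1,\dots,d_m\}}$ takes the same value at $x(t^\star;x_0,\theta)$ and at $x(t^\star;x_0',\theta)$. Since both points lie in $\Omega$ (by positive invariance), the injectivity hypothesis forces $x(t^\star;x_0,\theta) = x(t^\star;x_0',\theta)$.

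To finish, I would pull this equality back to time $0$. Because $f(\cdot,\theta)$ is locally Lipschitz in $x$, the autonomous Cauchy problem has unique solutions in both time directions: any two trajectories that coincide at a single time must coincide on their common interval of existence. Applying this to $x(\cdot;x_0,\theta)$ and $x(\cdot;x_0',\theta)$, which agree at $t^\star$ and are both defined on $[0,+\infty)$, yields $x_0 = x_0'$, establishing observability on $\Omega$ in $[a,b)$.

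The main (essentially only) subtle point is this last backward-in-time step, since the system is formulated for $t \geq 0$ only; it has to be justified that local Lipschitzness of $f_\theta$ makes the reverse-time Cauchy problem with datum $x(t^\star;x_0,\theta)$ well posed on $[0,t^\star]$ so that $x_0$ is recovered uniquely. The rest is a mechanical translation between outputs, Lie derivatives, and the map $\mathcal{L}_{f_\theta,h_\theta,\{d_1,\dots,d_m\}}$.
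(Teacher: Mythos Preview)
Your argument is correct and is precisely the standard one: match outputs on $[a,b)$, differentiate to match Lie derivatives at some interior $t^\star$, apply injectivity of $\mathcal{L}_{f_\theta,h_\theta,\{d_1,\dots,d_m\}}$ to conclude $x(t^\star;x_0,\theta)=x(t^\star;x_0',\theta)$, and then use uniqueness of solutions to pull this back to $t=0$. Note that the paper does not actually supply its own proof of this theorem; it explicitly states that this is the classical Lie-derivative observability result and refers the reader to \cite{Rapaport2024}, \cite{Gauthier1992}, \cite{ORC} in lieu of a proof, so there is nothing to compare against beyond confirming that your write-up matches the standard argument (it does; the backward-integration step you flag is also invoked later in the paper, in the proof of Theorem~\ref{det_univocally}, with the same Lipschitz justification).
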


\begin{remark}
    If one extends the dynamics with $\dot{\theta} = 0$, then both the identifiability and observability properties can be studied as a particular case of observability in higher dimension.
\end{remark}

We present now our main result, which is an alternative to usual approaches (as the one recalled above) to check identifiability.

\begin{theorem}\label{generalization_ident}
    Let $h_{\theta,i} \in \mathcal{C}^{d_i'}(\Omega; \RR^n)$, for some $d_i' \in \NN\cup\{0\}$, $i \in \{1,\dots,m\}$, $h_{\theta} = (h_{\theta,1}, \dots, h_{\theta,m})$, and $f_{\theta} \in \mathcal{C}^{d'-1}(\Omega; \RR^n)$, $d' = \max\{1, d_1', \dots, d_m'\}$, for any $\theta \in \Theta$. Consider $\mathcal{D} \subset \RR^{d_1'+\dots+d_m'+m}$ such that the output of System \eqref{general_sys} satisfies, for any $(\xi,\theta) \in \Omega\times\Theta$, that
    $$\left(y_1^{(0)}(t), \dots, y^{(d_1')}_1(t), \dots, y_m^{(0)}(t), \dots, y^{(d_m')}_m(t)\right) \in \mathcal{D}, \quad \forall\, t \in \mathcal{I}.$$
    If there exist maps $g : \mathcal{D} \rightarrow \RR^{q+p}$ and $r : \Theta \rightarrow \RR^q$, for some $q,p \in \NN$, and a subset $\mathcal{S} \subset \mathcal{I}$ such that every connected part of $S$ contains some open interval, satisfying:
    \begin{enumerate}
        \item $g = (g_{1,0}, \dots, g_{1,q_1}, \dots, g_{p,0}, \dots, g_{p,q_p})$ and $r = (r_{1,1}, \dots, r_{1,q_1}, \dots, r_{p,1}, \dots, r_{p,q_p})$, with $q_1 + \dots + q_p = q$, satisfy that
        \begin{equation}\label{linear_eq}
        g_{j,0}(y_1^{(0)}(t), \dots, y^{(d_m')}_m(t)) = \sum_{l=1}^{q_j} r_{j,l}(\theta) g_{j,l}(y_1^{(0)}(t), \dots, y^{(d_m')}_m(t)),
        \end{equation}
        for all $t \in \mathcal{S}$, $j \in \{1,\dots,p\}$, for any $(\xi,\theta) \in \Omega\times\Theta$,
        \item $r$ is injective, and
        \item for any $j \in \{1,\dots,p\}$ and $(\xi,\theta) \in \Omega\times\Theta$, we have that $g_{j,l}(y_1^{(0)}(t), \dots, y^{(d_m')}_m(t))$, for $l \in \{1,\dots,q_j\}$, are linearly independent functions with respect to $t \in \mathcal{S}$,
    \end{enumerate}then System \eqref{general_sys} is identifiable on $\Theta$ in $\mathcal{S}$ with initial conditions in $\Omega$.
\end{theorem}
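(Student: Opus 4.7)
The plan is to suppose two parameter vectors generate the same output on $\mathcal{S}$ and, by combining the structural identity \eqref{linear_eq} written for each parameter, to force them to coincide. Fix $\xi \in \Omega$ and let $\theta_1,\theta_2 \in \Theta$ satisfy $y_{(\xi,\theta_1)}(t)=y_{(\xi,\theta_2)}(t)$ for every $t\in\mathcal{S}$; I want to conclude $\theta_1 = \theta_2$. The first task is to upgrade this pointwise equality of outputs to equality of all the time derivatives that appear in \eqref{linear_eq}. Each connected component $\mathcal{C}$ of $\mathcal{S}$ is, by hypothesis, an interval of positive length. On $\mathcal{C}$, the two outputs agree as $\mathcal{C}^{d_i'}$ functions on a set with non-empty interior, so the derivatives $y_{(\xi,\theta_1)}^{(k)}$ and $y_{(\xi,\theta_2)}^{(k)}$ agree on the interior of $\mathcal{C}$ by classical differentiation, and by continuity (guaranteed by the regularity hypotheses on $h_\theta$ and $f_\theta$) this extends to the whole of $\mathcal{C}$. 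Doing this on every connected component gives equality of all the relevant derivatives at every $t \in \mathcal{S}$.

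Next, I would write \eqref{linear_eq} twice, once for $(\xi,\theta_1)$ and once for $(\xi,\theta_2)$. Because of the previous step, the arguments fed into $g_{j,0}$ and into each $g_{j,l}$ are identical in both identities at every $t\in\mathcal{S}$, so the two left-hand sides coincide and subtracting yields
$$\sum_{l=1}^{q_j}\bigl(r_{j,l}(\theta_1)-r_{j,l}(\theta_2)\bigr)\,g_{j,l}\!\bigl(y_1^{(0)}(t),\dots,y_m^{(d_m')}(t)\bigr)=0,\qquad \forall\, t\in\mathcal{S},$$
for each $j\in\{1,\dots,p\}$. Condition 3 asserts that the $q_j$ functions $t\mapsto g_{j,l}(y_1^{(0)}(t),\dots,y_m^{(d_m')}(t))$ are linearly independent on $\mathcal{S}$, so each coefficient $r_{j,l}(\theta_1)-r_{j,l}(\theta_2)$ must vanish. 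Concatenating over all $j$ and $l$, this is precisely $r(\theta_1)=r(\theta_2)$, and the injectivity of $r$ supplied by condition 2 then forces $\theta_1=\theta_2$.

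The step that requires the most care is the first one, since Definition \ref{identifiability_set} only produces agreement of the outputs on $\mathcal{S}$, not of their derivatives, and $\mathcal{S}$ need not itself be an interval. The hypothesis that every connected component of $\mathcal{S}$ contains an open interval is exactly what enables this lifting, via classical differentiation in the interior followed by a continuity argument at the boundary. Once this is in place the remainder of the argument is purely algebraic --- subtract the two instances of \eqref{linear_eq}, invoke linear independence (which via Lemma \ref{lindep} can also be rephrased in terms of a full-rank sampling matrix), and apply injectivity of $r$ --- and involves no Lie derivatives beyond those already encoded in the maps $g_{j,l}$.
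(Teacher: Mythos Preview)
Your argument is correct and follows essentially the same route as the paper's own proof: assume two parameters give the same output on $\mathcal{S}$, use the interval structure of the connected components to lift this to equality of the needed derivatives, subtract the two instances of \eqref{linear_eq}, and conclude via linear independence and the injectivity of $r$. You supply somewhat more justification for the derivative-lifting step (interior differentiation plus continuity at the endpoints) than the paper does, but the overall strategy is identical.
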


\begin{proof} Given $\xi \in \Omega$, let $\theta_1, \theta_2 \in \Theta$ such that
$$h(x(t; \xi,\theta_1),\theta_1) = h(x(t; \xi,\theta_2),\theta_2), \quad \forall\, t \in \mathcal{S},$$
i.e.,
$$y_{(\xi,\theta_1)}(t) = y_{(\xi,\theta_2)}(t), \quad \forall\, t \in \mathcal{S}.$$
Then, since every connected part of $\mathcal{S}$ contains some open interval, this implies that
$$y^{(k)}_{(\xi,\theta_1),i}(t) = y^{(k)}_{(\xi,\theta_2),i}(t), \quad \forall\, t \in \mathcal{S}, \ k \in \{0,\dots,d_i'\}, \ i \in \{1,\dots,m\}.$$
Now, since
$$g_{j,0}(y_{(\xi,\theta_1),1}^{(0)}, \dots, y_{(\xi,\theta_1),m}^{(d_m')}) - g_{j,0}(y_{(\xi,\theta_2),1}^{(0)}, \dots, y_{(\xi,\theta_2),m}^{(d_m')}) \equiv 0$$
in $\mathcal{S}$, from \eqref{linear_eq}, we obtain $$\sum_{l=1}^{q_j} \left(r_{j,l}(\theta_1) - r_{j,l}(\theta_2)\right) g_{j,l}(y_{(\xi,\theta_1),1}^{(0)}, \dots, y_{(\xi,\theta_1),m}^{(d_m')}) \equiv 0,$$
in $\mathcal{S}$, for every $j \in \{1,\dots,p\}$. Given the linear independence of $g_{j,l}(y_1^{(0)}, \dots, y_m^{(d_m')})$, $l \in \{1,\dots,q_j\}$, in $\mathcal{S} \subset \mathcal{I}$, for every $j \in \{1,\dots,p\}$, $(\xi,\theta) \in \Omega$, then
$$r(\theta_1) = r(\theta_2).$$
Since $r$ is an injective function, this implies that $\theta_1 = \theta_2$. Hence, System \eqref{general_sys} is identifiable on $\Theta$ in $\mathcal{S} \subset \mathcal{I}$ with initial conditions in $\Omega$.
\end{proof}

Next we will see that, assuming some assumptions are satisfied, if we know $y_{(x_0,\theta_0)}$ in some time set, then we are able to recover the unknowns $(x_0,\theta_0) \in \Omega\times\Theta$.

\begin{theorem}\label{det_univocally}
    Assume we know $y_{(x_0,\theta_0)}(t)$, $t \in \mathcal{S} \subset \mathcal{I}$, $\mathcal{S}$ such that every connected component contains an open interval. If the hypotheses of Theorems \ref{generalization_obs} and \ref{generalization_ident} are satisfied, then we can reconstruct the pair $(x_0,\theta_0)$ univocally using the values of $y_{(x_0,\theta_0)}$ and its derivatives at, at most, $q+1 = q_1 + \dots + q_p + 1$ suitable values of $t \in \mathcal{S}$.
\end{theorem}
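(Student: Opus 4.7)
The plan is twofold: first use the identifiability framework of Theorem \ref{generalization_ident} to recover $\theta_0$ by a finite collection of linear algebra inversions, then exploit the observability framework of Theorem \ref{generalization_obs} to recover $x_0$ from Lie-derivative readings once $\theta_0$ is in hand.

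For the parameter identification step, I fix $j \in \{1,\dots,p\}$. By hypothesis 3 of Theorem \ref{generalization_ident}, the functions $t \mapsto g_{j,l}(y_1^{(0)}(t),\dots,y_m^{(d_m')}(t))$, $l = 1,\dots,q_j$, are linearly independent on $\mathcal{S}$. Lemma \ref{lindep} then supplies $q_j$ distinct times $t_1^{(j)},\dots,t_{q_j}^{(j)} \in \mathcal{S}$ such that the matrix
$$M_j = \bigl(g_{j,l}(y_1^{(0)}(t_k^{(j)}),\dots,y_m^{(d_m')}(t_k^{(j)}))\bigr)_{l,k=1,\dots,q_j}$$
has full rank. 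Evaluating the linear relation \eqref{linear_eq} at these times yields a $q_j \times q_j$ invertible linear system whose unknowns are $(r_{j,1}(\theta_0),\dots,r_{j,q_j}(\theta_0))$; I solve it by inverting $M_j$. Repeating this for each $j = 1,\dots,p$ and concatenating the partial solutions reconstructs the full vector $r(\theta_0) \in \RR^q$, and by the injectivity of $r$ assumed in hypothesis 2 of Theorem \ref{generalization_ident}, this pins down $\theta_0$ uniquely.

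For the state-recovery step, $\theta_0$ is now known so the ODE $\dot{x} = f(x,\theta_0)$ is fully specified. Picking one further time $t^* \in \mathcal{S}$, the data supply $y^{(k)}(t^*) = L_{f_{\theta_0}}^k h_{\theta_0}(x(t^*;x_0,\theta_0))$ for $k = 0,\dots,\max_i d_i$, which together form the tuple $\mathcal{L}_{f_{\theta_0},h_{\theta_0},\{d_1,\dots,d_m\}}(x(t^*;x_0,\theta_0))$. The injectivity of that map, granted by Theorem \ref{generalization_obs}, lets me invert it to recover $x(t^*;x_0,\theta_0)$, and then $x_0$ is obtained by integrating the known ODE backward from $t^*$ to $0$ (or read off directly when $t^* = 0$ is available in $\mathcal{S}$).

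Counting the time samples: the union of the $t_k^{(j)}$ over $j = 1,\dots,p$ contains at most $q_1+\dots+q_p = q$ distinct values, and $t^*$ contributes at most one extra point, for a total of at most $q+1$, as claimed. I do not foresee a genuine obstacle: the key analytic content is already absorbed into Lemma \ref{lindep} (existence of the evaluation times that make each $M_j$ invertible) and into the hypotheses of the two prior theorems, so the argument reduces to bookkeeping. The only delicate point worth highlighting in the write-up is that both the linear inversions and the Lie-derivative inversion act pointwise in $t$, which is what makes the full reconstruction a truly finite-sample procedure rather than one requiring knowledge of the output on a whole interval.
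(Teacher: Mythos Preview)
Your proposal is correct and follows essentially the same approach as the paper's proof: both arguments use Lemma \ref{lindep} to select $q_j$ evaluation times per block $j$, solve the resulting invertible linear systems to obtain $r(\theta_0)$, recover $\theta_0$ by injectivity of $r$, then evaluate the Lie-derivative map at one (possibly reused) time, invert it via the hypothesis of Theorem \ref{generalization_obs}, and integrate the ODE backward to $t=0$. The paper's only additional remark is that the extra time $\tilde t$ may be chosen among the $t_{j,l}$ already used (or as $0$ when $0\in\mathcal{S}$), which you already account for with your ``at most one extra point'' count.
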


\begin{proof}  Let $\phi_{j,l}(t) = g_{j,l}(y_{(x_0,\theta_0),1}^{(0)}(t), \dots, y^{(d_m')}_{(x_0,\theta_0),m}(t))$, $t \in \mathcal{S}$, $l \in \{0,\dots,q_j\},\ j \in \{1,\dots,p\}$. Then, by hypothesis, $\phi_{j,1}, \dots, \phi_{j,q_j}$ are linearly independent in $\mathcal{S}$, for each $j \in \{1,\dots,p\}$. Hence, as seen in Lemma \ref{lindep}, for every $j \in \{1,\dots,p\}$, there exist $q_j$ different times $t_{j,1}, \dots, t_{j,q_j} \in \mathcal{S}$ such that
\begin{equation*}
    \det \left((\phi_{j,l}(t_{j,\ell}))_{
        \begin{subarray}{l}
            l=1,\dots,q_j \\
            \ell=1,\dots,q_j
        \end{subarray}
    }\right) \neq 0.
\end{equation*}
Then, there exists a unique solution $\sigma$ to 
\begin{equation}\left(
    \begin{array}{ccc}
        \phi_{j,1}(t_{j,1}) & \cdots & \phi_{j,q_j}(t_{j,1}) \\
        \vdots & \ddots & \vdots \\
        \phi_{j,1}(t_{j,q_j}) & \cdots & \phi_{j,q_j}(t_{j,q_j})
    \end{array}\right)\left(
    \begin{array}{c}
        \sigma_{j,1} \\
         \vdots \\
         \sigma_{j,q_j}
    \end{array}\right) = \left(
    \begin{array}{c}
        \phi_{j,0}(t_{j,1}) \\
        \vdots \\
        \phi_{j,0}(t_{j,q_j})
    \end{array}\right).
\end{equation}
Since it is unique, attending to \eqref{linear_eq}, it fulfills $\sigma_{j,l} = r_{j,l}(\theta_0)$, $l \in \{1,\dots,q_j\},\ j \in \{1,\dots,p\}$. Taking into account that we consider $r$ to be an injective function, such that $r^{-1} : r(\Theta) \rightarrow \Theta$, we may hence recover our original parameter vector $\theta_0$ as
\begin{equation}\label{iii}
    \theta_0 = r^{-1}(\sigma_{1,1}, \dots, \sigma_{p,q_p}).
\end{equation}
Finally, to recover the initial condition, take some time $\tilde{t} \in \mathcal{S}$, which can be some $\tilde{t} \in \{t_{1,1}, \dots, t_{p,q_p}\}$. Due to the injectivity of $\mathcal{L}_{f_{\theta_0},h_{\theta_0},\{d_1,\dots,d_m\}}$ in $\Omega$, there exists a unique $\tilde{\xi} \in \Omega$ such that
\begin{equation}\label{xitilde}
    \tilde{\xi} = \mathcal{L}^{-1}_{f_{\theta_0},h_{\theta_0},\{d_1,\dots,d_m\}}(y_{(x_0,\theta_0),1}^{(0)}(\tilde{t}), \dots, y_{(x_0,\theta_0),m}^{(d_m)}(\tilde{t})),
\end{equation}
noticing that $y_{(x_0,\theta_0),i}^{(k)}(\tilde{t}) = y_{(\tilde{\xi},\theta_0),i}^{(k)}(0)$, for all $i \in \{1,\dots,k\}$, $k \in \{0,\dots,d_i\}$. We can recover the initial condition in $\Omega$ integrating backwards the ODE system in System \eqref{general_sys} knowing $\tilde{\xi}$, $\tilde{t}$ and $\theta_0$ (we can do it because $f_{\theta_0}$ is Lipschitz in $\Omega$ positively invariant w.r.t. the ODE system given in \eqref{general_sys}). If $0 \in \mathcal{S}$, this part may be performed straightforwardly choosing $\tilde{t} = 0$.

This is, we have recovered $\theta_0$ and $x_0$ from the data univocally knowing $y_{(x_0,\theta_0)}(t)$, for all $t \in \mathcal{S}$, using its values and the values of its derivatives at $q+1$ (at most) different times.
\end{proof}

Therefore, given a system of first order autonomous ODEs, along with some observations, we can check the hypotheses in Theorems \ref{generalization_obs} and \ref{generalization_ident} in order to determine the observability and/or identifiability of our model. Then, if the hypotheses in Theorem \ref{det_univocally} are satisfied, we can recover the initial condition and parameter vector.

\begin{remark}\label{range_qi_times}
Notice that, in order to be able to recover $(x_0,\theta_0)$ following the procedure in the proof of Theorem \ref{det_univocally}, for each set $\{\phi_{j,1}, \dots, \phi_{j,q_j}\}$, $j \in \{1,\dots,p\}$, we need to find $q_j$ different suitable times. However, some of these times may coincide among different sets of linearly independent functions. Thus, the quantity of different times we need to find is between $\tilde{q} = \max\{q_1, \dots, q_p\}$ and $q = q_1 + \dots + q_p$, along with maybe $\tilde{t} = 0$, which we can use to recover the initial condition if $0 \in \mathcal{S}$ and could be one of the other times.

Recall, moreover, that we do not necessarily need $y_{(x_0,\theta_0)}(t)$, for all $t \in \mathcal{S}$, but it would be sufficient having the values of $y_{(x_0,\theta_0)}$ and its derivatives at the aforementioned different times, where the order of the derivatives that we need are the same as for Theorem \ref{det_univocally}.
\end{remark}

The times required in the proof of Theorem \ref{det_univocally} may be anywhere in $\mathcal{S} \subset \mathcal{I}$, and hence may be difficult to find in practice. In the following Lemma \ref{infinitesimal_ident} we give sufficient hypotheses such that we can identify System \eqref{general_sys} in any semi-open subset  $[a,b) \subset \mathcal{S}$, similarly as considered in Theorem \ref{generalization_obs}; this will imply that we will be able to choose this set of times in any open of these semi-open intervals. 

\begin{lemma}\label{infinitesimal_ident}
   Let us assume that the hypotheses of Theorem \ref{generalization_ident} are satisfied for some $\mathcal{S} \subset \mathcal{I}$ and, for any $j \in \{1,\dots,p\}$, $l \in \{1,\dots,q_j\}$ and $(\xi,\theta) \in \Omega\times\Theta$, the functions $g_{j,l}(y_1^{(0)}(t), \dots, y^{(d_m')}_m(t))$ are analytic functions with respect to $t \in \mathcal{I}$. Then, System \eqref{general_sys} is identifiable on $\Theta$ in any semi-open interval $[a,b) \subset \mathcal{S}$ with initial conditions in $\Omega$. Moreover, if $\mathcal{S}$ is connected, it is enough asking that $g_{j,l}(y_1^{(0)}(t), \dots, y_m^{(d_m')}(t))$, $j \in \{1,\dots,p\}$, $l \in \{1,\dots,q_j\}$, are analytic with respect to $t \in \mathcal{S}$.
\end{lemma}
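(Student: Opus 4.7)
The plan is to apply Theorem \ref{generalization_ident} with the set $\mathcal{S}$ replaced by the sub-interval $[a,b)$. Hypotheses 1 and 2 of that theorem transfer for free: the linear relation \eqref{linear_eq} holds pointwise on $\mathcal{S} \supset [a,b)$, and injectivity of $r$ does not depend on the time set. The topological condition is also immediate since $[a,b)$ is itself connected and contains the non-empty open interval $(a,b)$. The only real work is to verify hypothesis 3, namely the linear independence on $[a,b)$ of each family $\{g_{j,l}(y_1^{(0)}(t), \dots, y^{(d_m')}_m(t))\}_{l=1}^{q_j}$ for every $(\xi,\theta) \in \Omega \times \Theta$. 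This is precisely the step that uses the new analyticity assumption.

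Fix $(\xi,\theta) \in \Omega\times\Theta$ and $j \in \{1,\dots,p\}$, and set $\phi_{j,l}(t) = g_{j,l}(y_1^{(0)}(t), \dots, y^{(d_m')}_m(t))$. Suppose real constants $a_1,\dots,a_{q_j}$ satisfy $\sum_{l=1}^{q_j} a_l\, \phi_{j,l}(t) = 0$ for all $t \in [a,b)$. By hypothesis each $\phi_{j,l}$ is real-analytic on the connected set $\mathcal{I} = [0,+\infty)$, hence so is the linear combination $\sum_l a_l \phi_{j,l}$. This combination vanishes on $[a,b)$, a set with non-empty interior, so the identity theorem for real-analytic functions forces it to vanish on the whole connected domain $\mathcal{I}$, and a fortiori on $\mathcal{S}$. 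The original linear independence assumption on $\mathcal{S}$ (from the hypotheses of Theorem \ref{generalization_ident}) then gives $a_1 = \dots = a_{q_j} = 0$. This establishes linear independence on $[a,b)$, completing the verification of hypothesis 3. Applying Theorem \ref{generalization_ident} to $[a,b)$ yields identifiability on $\Theta$ in $[a,b)$ with initial conditions in $\Omega$.

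For the refinement when $\mathcal{S}$ is connected and analyticity is assumed only on $\mathcal{S}$ rather than on all of $\mathcal{I}$, the argument is the same but with $\mathcal{S}$ playing the role of $\mathcal{I}$ in the identity-theorem step: the linear combination is analytic on the connected set $\mathcal{S}$ and vanishes on the open sub-interval $(a,b) \subset [a,b) \subset \mathcal{S}$, which forces it to vanish on all of $\mathcal{S}$, and we conclude as before.

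The main subtlety I expect is simply to state the identity theorem in a form that applies here: one is using that a real-analytic function on a connected set in $\RR$ (interpreted through an open neighborhood when the set is not itself open, as with $\mathcal{I} = [0,+\infty)$) that vanishes on a subset containing an open interval must vanish identically. This is standard, and the presence of the open interval $(a,b)$ inside $[a,b)$ is exactly what makes the hypothesis ``every connected part of $\mathcal{S}$ contains some open interval'' in Theorem \ref{generalization_ident} the natural one to combine with analyticity. Nothing more delicate than this is required.
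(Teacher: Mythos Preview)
Your proof is correct and relies on the same key ingredient as the paper's, namely the identity theorem for real-analytic functions to extend the vanishing of a linear combination from $[a,b)$ to the larger set where linear independence is assumed. The only difference is organizational: you reduce directly to Theorem \ref{generalization_ident} by verifying its hypothesis 3 on $[a,b)$, whereas the paper essentially re-runs that theorem's argument inline from the definition of identifiability; this is not a substantive distinction, and in fact the paper uses exactly your reduction argument in the proof of the subsequent Lemma \ref{recover_infi_lindep}.
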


\begin{proof}
    Given $\xi \in \Omega$, let $\theta_1, \theta_2 \in \Theta$ such that
    $$h(x(t; \xi,\theta_1),\theta_1) = h(x(t; \xi,\theta_2),\theta_2), \quad \forall\, t \in [a,b),$$
    i.e.,
    $$y_{(\xi,\theta_1)}(t) = y_{(\xi,\theta_2)}(t), \quad \forall\, t \in [a,b).$$
    This implies that
    $$y^{(k)}_{(\xi,\theta_1),i}(t) = y^{(k)}_{(\xi,\theta_2),i}(t), \quad \forall\, t \in [a,b),\ k \in \{0,\dots,d_i'\},\ i \in \{1,\dots,m\}.$$
    Given that, for any $j \in \{1,\dots,p\}$, $l \in \{1,\dots,q_j\}$ and $(\xi,\theta) \in \Omega\times\Theta$, the functions $g_{j,l}(y_1^{(0)}, \dots,$ $y^{(d_m')}_m)$ are analytic in $\mathcal{I}$, then the function
    $$G_{j,\theta}(y_1^{(0)}, \dots, y^{(d_m')}_m) = \sum_{l=1}^{q_j} r_{j,l}(\theta)g_{j,l}(y_1^{(0)}, \dots, y^{(d_m')}_m)$$
    is also analytic in $\mathcal{I}$. Since $[a,b) \subset \mathcal{S}$,
    $$G_{j,\theta}(y_1^{(0)}, \dots, y_m^{(d_m')}) = g_{j,0}(y_1^{(0)}, \dots, y_m^{(d_m')}),$$
    in $[a,b)$ for any $\theta \in \Theta$, and
    $$g_{j,0}(y_{(\xi,\theta_1),1}^{(0)}, \dots, y_{(\xi,\theta_1),m}^{(d_m')}) = g_{j,0}(y_{(\xi,\theta_2),1}^{(0)}, \dots, y_{(\xi,\theta_2),m}^{(d_m')})$$
    in $[a,b)$, then
    $$G_{j,\theta_1}(y_{(\xi,\theta_1),1}^{(0)}, \dots, y_{(\xi,\theta_1),m}^{(d_m')}) = G_{j,\theta_2}(y_{(\xi,\theta_2),1}^{(0)}, \dots, y_{(\xi,\theta_2),m}^{(d_m')})$$
    in $[a,b)$. This implies that, for every $j \in \{1,\dots,p\}$,
    $$R_j = \sum_{l=1}^{q_j} (r_{j,l}(\theta_1) - r_{j,l}(\theta_2))g_{j,l}(y_{(\xi,\theta_1),1}^{(0)}, \dots, y_{(\xi,\theta_1),m}^{(d_m')}) \equiv 0$$
    in $[a,b)$. Due to the analyticity of $g_{j,l}(y_1^{(0)}, \dots, y_m^{(d_m')})$, $l \in \{1,\dots,q_j\}$, in $\mathcal{I}$, then $R_j$ is also analytic in $\mathcal{I}$. Then, if $R_j \equiv 0$ in $[a,b)$, we have that $R_j \equiv 0$ in $\mathcal{I}$ (\cite[Theorem 8.5]{Rudin1976}). Therefore, since $g_{j,l}(y_1^{(0)}, \dots, y_m^{(d_m')})$, $l \in \{1,\dots,q_j\}$, are linearly independent in $\mathcal{S} \subset \mathcal{I}$, they are in particular linearly independent in $\mathcal{I}$, and hence, in order for $R_j$ to be 0 in $\mathcal{I}$, for all $j \in \{1,\dots,p\}$, we need
    $$r(\theta_1) = r(\theta_2).$$
    Since $r$ is an injective function, this implies that $\theta_1 = \theta_2$.

    Notice that, if $\mathcal{S}$ is connected, $g_{j,l}(y_1^{(0)}, \dots, y_m^{(d_m')})$, $l \in \{1,\dots,q_j\}$, analytic in $\mathcal{S}$ implies $R_j$ is also analytic in $\mathcal{S}$, and $R_j \equiv 0$ in $[a,b) \subset \mathcal{S}$ connected implies $R_j \equiv 0$ in $\mathcal{S}$ (if $\mathcal{S}$ is not connected, we can only assure $R_j \equiv 0$ in the connected component of $\mathcal{S}$ containing $[a,b)$). Therefore, we conclude analogously using the linear independence of $g_{j,l}(y_1^{(0)}, \dots, y_m^{(d_m')})$, $l \in \{1,\dots,q_j\}$, in $\mathcal{S}$.
    
    Hence, System \eqref{general_sys} is identifiable on $\Theta$ in any $[a,b) \subset \mathcal{S}$ with initial conditions in $\Omega$.
\end{proof}

Taking into account Theorem \ref{generalization_obs} and Lemma \ref{infinitesimal_ident}, we can try to recover $x_0$ and $\theta_0$ knowing $y_{(x_0,\theta_0)}$ only in some $[a,b) \subset \mathcal{I}$. This will be shown in the following Lemma \ref{recover_infi_lindep}.

\begin{lemma}\label{recover_infi_lindep}
    Assume we know $y_{(x_0,\theta_0)}$ in some $[a,b) \subset \mathcal{I}$. If the hypotheses of Theorem \ref{generalization_obs} and Lemma \ref{infinitesimal_ident} are satisfied for some $\mathcal{S} \subset \mathcal{I}$ such that $[a,b) \subset \mathcal{S}$, then we can reconstruct $(x_0,\theta_0)$ univocally using the values of $y_{(x_0,\theta_0)}$ and its derivatives at a finite amount of suitable values of $t \in [a,b)$. Actually, the needed number of values with this procedure is between $\tilde{q} = \max\{q_1, \dots, q_p\}$ and $q = q_1 + \dots + q_p$.
\end{lemma}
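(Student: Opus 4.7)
The plan is to mimic the constructive argument of Theorem \ref{det_univocally}, but carry it out inside the semi-open interval $[a,b)$ rather than in the whole set $\mathcal{S}$. The essential new ingredient that must be supplied first is the linear independence of the functions $\phi_{j,l}(t) = g_{j,l}(y_{(x_0,\theta_0),1}^{(0)}(t),\dots,y_{(x_0,\theta_0),m}^{(d_m')}(t))$, $l \in \{1,\dots,q_j\}$, on $[a,b)$, which is strictly stronger than the linear independence on $\mathcal{S}$ assumed in Theorem \ref{generalization_ident}. This is where the analyticity hypothesis of Lemma \ref{infinitesimal_ident} will enter.

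First, I would argue as follows: suppose coefficients $a_1,\dots,a_{q_j}$ satisfy $\sum_{l=1}^{q_j} a_l \phi_{j,l}(t) = 0$ for every $t \in [a,b)$. By hypothesis, each $\phi_{j,l}$ is analytic on $\mathcal{I}$ (or on $\mathcal{S}$, in the connected case), so the finite linear combination is analytic on this domain. Since $[a,b)$ has nonempty interior and $\mathcal{I}$ is connected, the identity theorem (\cite[Theorem 8.5]{Rudin1976}) forces the combination to vanish on all of $\mathcal{I}$, hence on $\mathcal{S}$. By the linear independence of $\phi_{j,1},\dots,\phi_{j,q_j}$ on $\mathcal{S}$ granted by hypothesis~3 of Theorem \ref{generalization_ident}, we conclude $a_1 = \dots = a_{q_j} = 0$. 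In the alternative case where $\mathcal{S}$ is connected and analyticity is only assumed on $\mathcal{S}$, exactly the same argument applies, since $[a,b) \subset \mathcal{S}$ and $\mathcal{S}$ is connected.

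Having established linear independence of $\{\phi_{j,l}\}_{l=1}^{q_j}$ on $[a,b)$ for each $j \in \{1,\dots,p\}$, I would invoke Lemma \ref{lindep} to obtain, for every $j$, times $t_{j,1},\dots,t_{j,q_j} \in [a,b)$ such that the matrix $(\phi_{j,l}(t_{j,\ell}))_{l,\ell=1,\dots,q_j}$ is invertible. Evaluating equation \eqref{linear_eq} at these times produces a linear system whose unique solution is $\sigma_{j,l} = r_{j,l}(\theta_0)$, exactly as in the proof of Theorem \ref{det_univocally}. The injectivity of $r$ then yields $\theta_0$ via \eqref{iii}. With $\theta_0$ in hand, I pick any $\tilde t$ among the already chosen times, evaluate $y_{(x_0,\theta_0)}$ and its derivatives up to the required order at $\tilde t$, apply the inverse of $\mathcal{L}_{f_{\theta_0},h_{\theta_0},\{d_1,\dots,d_m\}}$ (well defined by Theorem \ref{generalization_obs}) to recover $\tilde \xi = x(\tilde t; x_0,\theta_0)$, and finally integrate the ODE backwards from $\tilde t$ to $0$ (legitimate by the Lipschitz character of $f_{\theta_0}$ and positive invariance of $\Omega$) to obtain $x_0$.

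The bookkeeping on the number of needed evaluation times follows as in Remark \ref{range_qi_times}: one can always take the $q = q_1+\dots+q_p$ times $t_{j,\ell}$ distinct, but since the sets of times for different $j$ need not be disjoint — any times chosen in $[a,b)$ at which the respective submatrices are invertible work — the actual cardinality can be reduced to at least $\tilde q = \max\{q_1,\dots,q_p\}$. The main obstacle in the argument is the first paragraph above: transferring linear independence from $\mathcal{S}$ down to the a priori much smaller interval $[a,b)$. Once analyticity bridges that gap via the identity theorem, the rest of the proof is essentially a relocation of the constructive scheme already used for Theorem \ref{det_univocally}.
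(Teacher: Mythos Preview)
Your proposal is correct and follows essentially the same approach as the paper's own proof: you identify that the only new ingredient beyond the scheme of Theorem~\ref{det_univocally} is transferring linear independence of the $\phi_{j,l}$ from $\mathcal{S}$ down to $[a,b)$ via analyticity and the identity theorem (\cite[Theorem~8.5]{Rudin1976}), treat both the $\mathcal{I}$-analytic and the connected-$\mathcal{S}$ cases, and then reuse Lemma~\ref{lindep}, the linear systems, the injectivity of $r$, and the backward-integration step exactly as in Theorem~\ref{det_univocally} with the time count from Remark~\ref{range_qi_times}. This matches the paper's argument in structure and detail.
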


\begin{proof}
    The way to recover the parameters $\theta_0$ is analogous to the proof of Theorem \ref{det_univocally}. Indeed, we only need to see that, given $j \in \{1,\dots,p\}$, since the functions $g_{j,l}(y_1^{(0)}, \dots, y^{(d_m')}_m)$, $l \in \{1,\dots,q_j\}$, are linearly independent in some $\mathcal{S} \subset \mathcal{I}$ and analytic in $\mathcal{I}$, they are linearly independent also in $[a,b)$.
    
    Given $j \in \{1,\dots,p\}$, assume that $g_{j,l}(y_1^{(0)}, \dots,$ $y^{(d_m')}_m)$, $l \in \{1,\dots,q_j\}$, are linearly dependent in $[a,b)$, i.e., there exist $a_{j,1}, \dots, a_{j,q_j} \in \RR$ not all of them null such that
    $$G_j(t) = \sum_{l=1}^{q_j} a_{j,l}g_{j,l}(y_1^{(0)}(t), \dots, y^{(d_m')}_m(t)) = 0, \ \forall\, t \in [a,b).$$
    Since $g_{j,l}(y_1^{(0)}, \dots, y^{(d_m')}_m)$, $l \in \{1,\dots,q_j\}$, are analytic in $\mathcal{I}$, then so it is $G_j$. Hence, because of \cite[Theorem 8.5]{Rudin1976}, this implies that $G_j \equiv 0$ in $\mathcal{I}$ and, thus, $g_{j,l}(y_1^{(0)}, \dots, y^{(d_m')}_m)$, $l \in \{1,\dots,q_j\}$, are linearly dependent in $\mathcal{I}$, and hence in $\mathcal{S}$, which is a contradiction. Moreover, if $\mathcal{S}$ is connected, it is enough asking for $g_{j,l}(y_1^{(0)}, \dots, y^{(d_m')}_m)$, $l \in \{1,\dots,q_j\}$, analytic in $\mathcal{S}$, since, hence, so it is $G_j$ and, again because of \cite[Theorem 8.5]{Rudin1976}, $G_j(t) = 0$ for $t \in [a,b) \subset \mathcal{S}$ connected implies $G_j \equiv 0$ in $\mathcal{S}$, which leads to the same contradiction.

    Therefore, for each $j \in \{1,\dots,p\}$, the functions $\phi_{j,1}, \dots,$ $\phi_{j,q_j}$, with
    $$\phi_{j,l} = g_{j,l}(y_{(x_0,\theta_0),1}^{(0)}, \dots, y^{(d_m')}_{(x_0,\theta_0),m}), \quad l \in \{1,\dots,q_j\},$$
    are linearly independent in $[a,b)$ and we can conclude analogously to the proof of Theorem \ref{det_univocally}, along with Remark \ref{range_qi_times}, choosing in this case $t_{1,1}, \dots, t_{p,q_p} \in [a,b)$.

    On the other hand, to recover the initial condition, we proceed as in the proof of Theorem \ref{det_univocally}.

    Hence, we are able to recover $(x_0,\theta_0)$ univocally when knowing $y_{(x_0,\theta_0)}(t)$, $t \in [a,b)$, using its values and the values of its derivatives at some finite set of times in $[a,b)$; concretely, between $\tilde{q}$ and $q$ different suitable times in $[a,b)$.
\end{proof}

\begin{remark}\label{ydiscret2}
    Recall that, as in Remark \ref{range_qi_times}, we may not need to know $y_{(x_0,\theta_0)}$ in some interval $[a,b)$, but only the values of this function and its derivatives at the times indicated in the proof of Lemma \ref{recover_infi_lindep}, where the needed order of the derivatives is given by the same lemma.
\end{remark}

\begin{remark}\label{rem_wron}
    The needed amount of times required in Lemma \ref{recover_infi_lindep} could be reduced to 1 if we used higher derivatives of $y$. This procedure is very similar to what is classically performed, and hence we do not treat this case in this document.
\end{remark}

Up to now, we have provided some hypotheses that assure a system is identifiable and observable. Besides, if we have some analyticity properties, we can have data only in an interval $[a,b) \subset \mathcal{I}$ as small as desired. Let us illustrate it in the following examples.

\section{Illustrative examples}\label{illexmp}

In all of the following examples we will need to consider that we have non-constant observations. Hence, for simplicity of the arguments, we will systematically omit solutions from the state spaces such that the observation variables are constant. Such solutions will typically be steady states, but not only.

\subsection{Linearly parameterized rational systems}

We revisit the class of nonlinear systems considered in \cite{Gevers2016}:
\begin{equation}\label{linearlyparam}
    \begin{array}{lcl}
        \dot{x} & = & \dfrac{1}{n(x)} \left(\theta^\mathrm{T}\varphi(x) + \displaystyle\sum_{i=0}^m \rho_i(x)u^i\right),\\
        y &=& x,
    \end{array}
\end{equation}
where $x(0) = x_0 \in \Omega \subset \RR$, $\Omega$ positively invariant with respect to the ODE of System \eqref{linearlyparam}; $\theta \in \Theta = \RR^b$ is the vector of unknown parameters; every component $\varphi_j$ of $\varphi = (\varphi_1, \dots, \varphi_b)^{\mathrm{T}}$ is a polynomial function; $n$, $\rho_i$, $i \in \{0,\dots,m\}$, are also polynomial functions such that $n(x) > 0$, $x \in \Omega$; and $u$ is a scalar time function (which acts as a control in \cite{Gevers2016}). It is clear that, if $t \mapsto u(t)$ is analytic (as required in \cite{Gevers2016}), for any $(\xi,\theta) \in \Omega\times\Theta$, then, as the right-hand side of the ODE is locally Lipschitz-continuous w.r.t. $x \in \Omega$, the corresponding solution $x(\cdot)$ is unique and analytic. Note that one can recover the general autonomous framework considering that $t$ is another (known) state variable such that $\dot{t} = 1$.

The authors study the identifiability of System \eqref{linearlyparam} with respect to $\theta$.

Let $\varphi = (\varphi_1, \dots, \varphi_b)^{\mathrm{T}}$ and $s$ be the highest degree among the $\varphi_j$, $j \in \{1,\dots,b\}$. One has
$$\varphi_j(x) = \sum_{i=0}^s a_{i,j}x^i, \quad j \in \{1,\dots,b\},$$
for some $a_{i,j} \in \RR$, $i \in \{0,\dots,s\}$, $j \in \{1,\dots,b\}$. Let $A = (a_{i,j})_{i = 0,\dots,s,\ j = 1,\dots,b}$.

We will need the following Assumptions \ref{varphidegree} and \ref{x_nonconstant}, which also play a role in \cite{Gevers2016}:
\begin{assum}\label{varphidegree}
   The highest degree $s$ of polynomials $\varphi_j$, $j \in \{1,\dots,b\}$, satisfies $s \geq b-1$.
\end{assum}

\begin{assum}\label{x_nonconstant}
    We assume that the control $u(\cdot)$ is an analytic function in time such that the solution of the ODE in System \eqref{linearlyparam} is defined for all $t \geq 0$ and not constant.
\end{assum}

Then, we claim the following:
\begin{prop}
    Under Assumptions \ref{varphidegree} and \ref{x_nonconstant}, System \eqref{linearlyparam} is identifiable on $\Theta$ with initial conditions in $\Omega$ if, and only if, $A$ has full rank. Moreover, we can determine $\theta$ univocally in terms of $y$, $u$ and $\dot{y}$.
\end{prop}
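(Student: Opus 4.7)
The plan is to recast System \eqref{linearlyparam} into the form of Theorem \ref{generalization_ident} and then reduce the linear independence hypothesis to a purely algebraic full-rank condition on $A$. Multiplying the ODE by $n(y)>0$ yields
$$n(y(t))\,\dot y(t) - \sum_{i=0}^{m}\rho_i(y(t))\,u(t)^i \;=\; \sum_{j=1}^{b}\theta_j\,\varphi_j(y(t)), \quad t\geq 0,$$
which is exactly \eqref{linear_eq} with $p=1$, $q_1=b$, $d_1'=1$, $g_{1,0}$ given by the left-hand side (treating $u$ as a known time function via the autonomous extension $\dot t = 1$ if desired), $g_{1,l}(y)=\varphi_l(y)$, and $r(\theta)=\theta$. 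The map $r$ is the identity, hence trivially injective, so only the linear independence of the family $\{\varphi_l(y(\cdot))\}_{l=1}^{b}$ remains to verify.

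The core step is the equivalence: for every $(\xi,\theta) \in \Omega\times\Theta$, the family $\{\varphi_l(y(\cdot))\}_{l=1}^{b}$ is linearly independent on $\mathcal{I}$ if and only if $\ker A = \{0\}$. Indeed, for any $c \in \RR^b$,
$$\sum_{j=1}^{b} c_j\,\varphi_j(x) \;=\; \sum_{i=0}^{s}(Ac)_i\,x^i,$$
so a nontrivial relation $\sum_j c_j\varphi_j(y(t))\equiv 0$ amounts to a polynomial of degree at most $s$ vanishing on the image of $y(\cdot)$. By Assumption \ref{x_nonconstant}, $y$ is analytic and non-constant, hence its image is infinite; the polynomial must therefore be identically zero, forcing $Ac=0$. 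The reverse implication is immediate. Under Assumption \ref{varphidegree}, $s+1\geq b$, so trivial kernel is precisely what ``full rank of $A$'' means.

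If $A$ has full rank, Theorem \ref{generalization_ident} then delivers identifiability. Conversely, if $A$ is rank-deficient, pick $c\neq 0$ with $Ac=0$; then $\theta^{\mathrm{T}}\varphi \equiv (\theta+c)^{\mathrm{T}}\varphi$ as polynomials in $x$, so for every $\xi\in\Omega$ the parameters $\theta$ and $\theta+c$ generate the same ODE and hence the same output, breaking identifiability. For the explicit recovery, invoke Lemma \ref{lindep} to obtain times $t_1,\dots,t_b \in \mathcal{I}$ at which $(\varphi_j(y(t_k)))_{j,k}$ is invertible, and solve the resulting $b\times b$ linear system; this expresses $\theta$ explicitly in terms of $y$, $\dot y$ and $u$ evaluated at those times, as announced.

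The main obstacle is the linear-independence-to-full-rank equivalence, which rests squarely on Assumption \ref{x_nonconstant} to convert a functional identity in $t$ into a polynomial identity in $x$; once this is in hand, every other assertion is a direct application of the paper's framework.
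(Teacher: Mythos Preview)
Your argument is correct, but it packages the application of Theorem \ref{generalization_ident} differently from the paper. The paper first rewrites the right-hand side in the monomial basis $1,y,\dots,y^s$, takes $g_{1,l}(y)=y^{l-1}$ (so $q_1=s+1$) and $r(\theta)=A\theta$; then the linear independence of $1,y,\dots,y^s$ is immediate from $y$ being analytic non-constant, and the full-rank condition on $A$ appears solely as the injectivity of $r$. You instead keep the original basis $g_{1,l}=\varphi_l$ (so $q_1=b$) and $r=\mathrm{id}$, pushing the rank condition into the linear-independence hypothesis via the clean equivalence $\ker A=\{0\}\Leftrightarrow\{\varphi_j(y(\cdot))\}_j$ independent. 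Both routes are valid; yours is slightly more economical (only $b\leq s+1$ sampling times in the reconstruction step) and makes the role of Assumption \ref{varphidegree} transparent, while the paper's route keeps the linear-independence check trivial at the cost of a larger linear system and an extra inversion through a full-rank $b\times b$ submatrix of $A$.
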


\begin{proof}
Let us rewrite the ODE equation in System \eqref{linearlyparam} in the following way:
\begin{equation}\label{linear_x}
    n(x)\dot{x} - \displaystyle\sum_{i=0}^s \rho_i(x)u^i = \displaystyle\sum_{i=0}^s \left(\sum_{j=1}^b \theta_ja_{i,j}\right)x^i = (1,x,\dots,x^s)A\theta,
\end{equation}
i.e., opposed to the linearly parameterized expression considered in System \eqref{linearlyparam}, we sort the expression in terms of monomials of $x$, which will be more suitable for our methodology. Then, notice first that, if $A$ does not have full rank, for any $\theta \in \Theta$, there exists some $\tilde{\theta} \in \Theta$ such that $A\theta = A\tilde{\theta}$. Hence, for System \eqref{linearlyparam} to be identifiable, we need that $A$ has full rank.

Let us now differentiate $y$ once and substitute $x$ and $\dot{x}$ in \eqref{linear_x} to obtain
\begin{equation}\label{linear_y}
    n(y)\dot{y} - \sum_{i=0}^s \rho_i(y)u^i = \sum_{i=0}^s \left(\sum_{j=1}^b \theta_ja_{i,j}\right)y^i.
\end{equation}
Then, we have an equation in the form of \eqref{linear_eq}, i.e., point 1 of Theorem \ref{generalization_ident} is satisfied. Let
$$r(\theta) = \left(\sum_{j=1}^b \theta_ja_{0,j}, \dots, \sum_{j=1}^b \theta_ja_{s,j}\right) = A\theta.$$
This implies that point 2 of Theorem \ref{generalization_ident}, i.e., the injectivity of $r$, is also satisfied if $A \in \RR^{(s+1)\times b}$ has full rank. Finally, proving point 3 of Theorem \ref{generalization_ident} is straightforward, since equation \eqref{linear_y} is a polynomial on $y$, and $y$ is analytic non-constant due to Assumption \ref{x_nonconstant}, so $1, y, \dots, y^s$ are linearly independent. Hence, we can recover univocally $\theta$ differentiating the observations just once, instead of differentiating $s$ times as performed in \cite{Gevers2016}.

Then, we need to find at most $s+1$ different times $t_1, \dots, t_{s+1}\geq 0$, which can be chosen in some $[a,b) \subset [0,\infty)$ as small as desired, such that we can solve univocally the following linear system:
$$\begin{pmatrix}
    1 & y(t_1) & \dots & y^s(t_1) \\
    \vdots & \vdots & \ddots & \vdots \\
    1 & y(t_{s+1}) & \dots & y^s(t_{s+1})
\end{pmatrix}\begin{pmatrix}
    \sigma_1 \\
    \vdots \\
    \sigma_{s+1}
\end{pmatrix} = \begin{pmatrix}
    \psi(t_1) \\
    \vdots \\
    \psi(t_{s+1})
\end{pmatrix},$$
where $\psi(t_l) = n(y(t_l))\dot{y}(t_l) - \sum_{i=0}^s \rho_i(y(t_l))u(t_l)^i$, $l \in \{1,\dots,s+1\}$.
This implies that, if $A$ has full rank, there exists some submatrix $\tilde{A} \in \RR^{b\times b}$ of $A$ such that, given $r(\theta) = \sigma$, it implies $\theta = \tilde{A}^{-1}\sigma$.
\end{proof}

\subsection{A non-rational system}

We revisit now Example 4 of \cite{Jain2019}, which is the motivating example the authors use to discuss the approach proposed in \cite{Saccomani2003} based on differential algebra.

Let us consider the following model for a non-isothermal reactor system:
\begin{equation}\label{nonisothermal}
    \begin{array}{lcl}
        \dot{c}_A & = & -k_{10}\mathrm{e}^{-\frac{E}{T}}c_A, \\
        \dot{c}_B & = & k_{10}\mathrm{e}^{-\frac{E}{T}}c_A, \\
        \dot{T} & = & -h_1k_{10}\mathrm{e}^{-\frac{E}{T}}c_A, \\
        y_1 & = & c_A, \\
        y_2 & = & T,
    \end{array}
\end{equation}
for $(c_A(0), c_B(0), T(0))^{\mathrm{T}} \in \Omega = (0,\infty)\times[0,\infty)\times(0,\infty)$ and $(k_{10}, h_1, E)^{\mathrm{T}} \in \Theta = (0,\infty)^3$. The set $\Omega$ is clearly positively invariant for System \eqref{nonisothermal}. We aim to recover the parameters $\theta = (k_{10}, h_1, E)^{\mathrm{T}}$, which the authors do through several computations using Padé approximation and differential algebra.

For our approach, we differentiate $y_1$ and $y_2$ once, obtaining
$$\dot{y}_1 = -k_{10}\mathrm{e}^{-\frac{E}{y_2}}y_1, \quad \dot{y}_2 = -h_1k_{10}\mathrm{e}^{-\frac{E}{y_2}}y_1.$$
Then, after some computations, we obtain the following relations in the form of \eqref{linear_eq} in point 1 of Theorem \ref{generalization_ident}:
\begin{equation}\label{lineqiso}
    \log(-\dot{y}_1) - \log(y_1) = \log(k_{10}) - \dfrac{E}{y_2}, \quad \dfrac{\dot{y}_1}{\dot{y}_2} = h_1.
\end{equation}
Let now
$$r(\theta) = \left(\log(k_{10}), -E, h_1\right),$$
which is clearly injective from $\Theta$ to $r(\Theta)$, satisfying point 2 of Theorem \ref{generalization_ident}. Finally, checking point 3 of Theorem \ref{generalization_ident} consists on proving the linear independence when we consider $(\xi,\theta) \in \Omega\times\Theta$ only of $1$ and $1/y_2$, since it is trivial for the second equation in \eqref{lineqiso}. The linear independence of $1$ and $1/y_2$ is also straightforward since $y_2$ is a non-constant function.

Then, the hypotheses of Theorem \ref{generalization_ident} are satisfied differentiating only once, and we can therefore identify $k_{10}$, $h_1$ and $E$. For $h_1$, it is clear that, for any $t \geq 0$, $h_1 = \dot{y}_1(t)/\dot{y}_2(t)$. On the other hand, we need to find $t_1, t_2 \geq 0$ such that we can solve univocally:
$$\begin{pmatrix}
    1 & \dfrac{1}{y_2(t_1)} \\[1em]
    1 & \dfrac{1}{y_2(t_2)}
\end{pmatrix}\begin{pmatrix}
    \sigma_1 \\
    \sigma_2
\end{pmatrix}=\begin{pmatrix}
    \log(-\dot{y}_1(t_1))-\log(y_1(t_1)) \\
    \log(-\dot{y}_1(t_2))-\log(y_1(t_2))
\end{pmatrix},$$
and then
$$k_{10} = \mathrm{e}^{\sigma_1}, \quad E = -\sigma_2.$$
For this particular case, it can be easily seen that $y_2$ is strictly monotonic in $\Omega$, and hence we can choose $t_1$ and $t_2$ in $[0,\varepsilon)$, for any $\varepsilon > 0$.

Let us underline that we do not make any approximation here, opposed to what the authors perform in \cite{Jain2019}.

\subsection{Hénon-Heiles system}

In \cite{Holmsen2024}, the authors propose a numerical algorithm for identification of pseudo-Hamiltonian systems without an analytical study of identifiability. We revisit the example they propose in 5.1, which is a pure, separable Hamiltonian system. Let us consider we do not know the coefficients of the following Hamiltonian associated to the Hénon-Heiles system, a well-known model in astronomy (see \cite{Henon1964}):
\begin{equation}\label{hamiltonian}
    H(q,p) = a_1q_1^2 + a_2q_2^2 + a_3p_1^2 +  a_4p_2^2 + a_5q_1^2q_2 + a_6q_2^3,
\end{equation}
with $(q,p) \in \RR^2\times\RR^2$, $\theta = (a_1,\dots,a_6) \in \Theta = \{\theta \in \RR^6 : \theta_i \neq 0, \ i = 1,\dots,6\}$. We consider the following system (the Hénon-Heiles system) derived from $H$, along with the observation of both $q$ and $p$:
\begin{equation}\label{henonheiles}
    \begin{array}{lclcl}
        \dot{p}_1 & = & \phantom{-}\partial H/\partial q_1 & = & 2a_1q_1 + 2a_5q_1q_2, \\
        \dot{p}_2 & = & \phantom{-}\partial H/\partial q_2 & = & 2a_2q_2 + a_5q_1^2 + 3a_6q_2^2, \\
        \dot{q}_1 & = & -\partial H/\partial p_1 & = & -2a_3p_1, \\
        \dot{q}_2 & = & -\partial H/\partial p_2 & = & -2a_4p_2, \\
        y_1 & = & q_1, & & \\
        y_2 & = & q_2, & & \\
        y_3 & = & p_1, & & \\
        y_4 & = & p_2, & &
    \end{array}
\end{equation}
considering $(q(0),p(0)) \in \Omega = \RR^4\setminus\{E_1, E_2, E_3, E_4\}$, where $E_1, E_2, E_3, E_4$ are the four different equilibria that the dynamical system of System \eqref{henonheiles} can have. This way, $y_1, y_2, y_3, y_4$ are non-constant. We aim to recover the parameters $\theta = (a_1, a_2, a_3, a_4, a_5, a_6)^{\mathrm{T}}$, all of them being assumed non-null. We start differentiating $y_1, y_2, y_3, y_4$, obtaining
\begin{equation}\label{y_hamil}
    \dot{y}_1 = -2a_3y_3, \quad \dot{y}_2 = -2a_4y_4, \quad \dot{y}_3 = 2a_1y_1 + 2a_5y_1y_2, \quad \dot{y}_4 = 2a_2y_2 + a_5y_1^2 + 3a_6y_2^2.
\end{equation}
We obtain four relations in the form of \eqref{linear_eq} in point 1 of Theorem \ref{generalization_ident}. Let now
$$r(\theta) = (-2a_3, -2a_4, 2a_1, 2a_5, 2a_2, a_5, 3a_6),$$
which is injective in $\Theta$, satisfying point 2 of Theorem \ref{generalization_ident}. Finally, we need to check point 3 of Theorem \ref{generalization_ident}, i.e., whether different functions are linearly independent considering $(\xi,\theta) \in \Omega\times\Theta$. This point is straightforward for the two first equations in \eqref{y_hamil} since $y_3 \not \equiv 0$ and $y_4 \not \equiv 0$. We need now to check the linear independence of the functions in the sets $G_1 = \{y_1, y_1y_2\}$ and $G_2 = \{y_2, y_1^2, y_2^2\}$ whenever $(\xi,\theta) \in \Omega\times\Theta$:
\begin{itemize}
    \item For set $G_1$: let $a, b \in \RR$ such that $ay_1 + by_1y_2 = 0$, which, in terms of $q, p$ is $aq_1 + bq_1q_2 = 0$. Since $q_1 \not \equiv 0$ in $\Omega$, it is equivalent to $a + bq_2 = 0$. Then, since we consider $q_2$ non-constant, the functions in set $G_1$ are linearly independent when $(\xi,\theta) \in \Omega\times\Theta$.
    \item For set $G_2$: we have not been able to prove the linear independence of $y_2, y_1^2, y_2^2$. Nevertheless, if we consider only the other three equations in \eqref{y_hamil} we have just proved that, in particular, we can identify $2a_1$ and $2a_5$ using different times $t_{3,1}, t_{3.2} \geq 0$ such that we can solve the following system:
    $$\begin{pmatrix}
        y_1(t_{3,1}) & y_1(t_{3,1})y_2(t_{3,1}) \\
        y_1(t_{3,2}) & y_1(t_{3,2})y_2(t_{3,2})
    \end{pmatrix}\begin{pmatrix}
        \sigma_1 \\
        \sigma_2
    \end{pmatrix} = \begin{pmatrix}
        \dot{y}_3(t_{3,1}) \\
        \dot{y}_3(t_{3,2})
    \end{pmatrix}.$$
    Then, $a_1 = \sigma_1/2$ and $a_5 = \sigma_2/2$. In particular, we can rewrite the fourth equation in \eqref{y_hamil} as
    $$\dot{y}_4 - \dfrac{\sigma_2}{2}y_1^2 = 2a_2y_2 + 3a_6y_2^2,$$
    which is also in the form of \eqref{linear_eq}, and hence we just need to prove the linear independence of $y_2$ and $y_2^2$, which is equivalent to prove the linear independence of $q_2$ and $q_2^2$. This is straightforward since we are not considering equilibrium points and $q_2$ is analytic.
\end{itemize}

Then, the hypotheses of Theorem \ref{generalization_ident} are satisfied if we consider the equations
\begin{equation}\label{y_hamil2}
    \dot{y}_1 = -2a_3y_3, \quad \dot{y}_2 = -2a_4y_4, \quad \dot{y}_3 = 2a_1y_1 + 2a_5y_1y_2, \quad \dot{y}_4 - \dfrac{\sigma_{3,2}}{2}y_1^2 = 2a_2y_2 + 3a_6y_2^2,
\end{equation}
taking into account what explained previously concerning $\sigma_{3,2}$, and
$$\tilde{r}(\theta) = (-2a_2, -2a_4, 2a_1, 2a_5, 2a_2, 3a_6).$$
Then, on one hand, for almost any $t \geq 0$, $a_3 = -\dot{y}_1(t)/(2y_3(t))$ and $a_4 = -\dot{y}_2(t)/(2y_4(t))$. On the other hand, there exist different $t_{3,1}, t_{3,2} \geq 0$ and different $t_{4,1}, t_{4,2} \geq 0$, which can be chosen in any $[a,b) \subset \mathcal{I}$, such that we can solve univocally the following systems:
$$\begin{array}{rcl}\begin{pmatrix}
    y_1(t_{3,1}) & y_1(t_{3,1})y_2(t_{3,1}) \\
    y_1(t_{3,2}) & y_1(t_{3,2})y_2(t_{3,2})
\end{pmatrix}\begin{pmatrix}
    \sigma_{3,1} \\
    \sigma_{3,2}
\end{pmatrix}&=&\begin{pmatrix}
    \dot{y}_3(t_{3,1}) \\
    \dot{y}_3(t_{3,2})
\end{pmatrix}, \\[1.5em]
\begin{pmatrix}
    y_2(t_{4,1}) & y_2^2(t_{4,1}) \\
    y_2(t_{4,2}) & y_2^2(t_{4,2})
\end{pmatrix}\begin{pmatrix}
    \sigma_{4,1} \\
    \sigma_{4,2}
\end{pmatrix}&=&\begin{pmatrix}
    \dot{y}_4(t_{4,1}) - \dfrac{\sigma_{3,2}}{2}y_1^2(t_{4,1}) \\[0.75em]
    \dot{y}_4(t_{4,2}) - \dfrac{\sigma_{3,2}}{2}y_1^2(t_{4,2})
\end{pmatrix}.
\end{array}$$
Then,
$$a_1 = \frac{\sigma_{3,1}}{2}, \quad a_5 = \frac{\sigma_{3,2}}{2}, \quad a_2 = \frac{\sigma_{4,1}}{2}, \quad a_6 = \frac{\sigma_{4,2}}{3},$$
i.e., the parameters of model \eqref{henonheiles} are identifiable and can be recovered through our methodology.

\subsection{A Lotka-Volterra system}

We consider a classical Lotka-Volterra model along with the observation of the predation term (i.e., the death of the preys by the predators) and the natural death of the predators, and we will aim to reconstruct both states and all the parameters:
\begin{equation}\label{LVsys}
    \begin{array}{lcl}
        \dot{x}_1 & = & \alpha x_1 - \beta x_1x_2, \\
        \dot{x}_2 & = & \gamma x_1x_2 - \delta x_2, \\
        y_1 & = & \beta x_1x_2, \\
        y_2 & = & \delta x_2,
    \end{array}
\end{equation}
with $(x_1(0),x_2(0))^{\mathrm{T}} \in \Omega = (0,\infty)^2\setminus\{(\delta/\gamma,\ \beta/\alpha)^{\mathrm{T}}\}$ and $\theta = (\alpha, \beta, \gamma, \delta)^{\mathrm{T}} \in \Theta = (0,\infty)^4$. The set $\Omega$ is positively invariant with respect to the system of ODEs of System \eqref{LVsys}. First, for any $\theta \in \Theta$, the map
$$\xi \mapsto (y_{(\xi,\theta),1}(0), y_{(\xi,\theta),2}(0)) = (\beta\xi_1\xi_2, \delta \xi_2)$$
is injective in $\Omega$. Then, due to Theorem \ref{generalization_obs}, System \eqref{LVsys} is observable on $\Omega$ in any $[a,b) \subset \mathcal{I}$ with parameters in $\Theta$. Next, if we differentiate once $y_1$ and $y_2$, we obtain, after some computations
\begin{equation}\label{first_dy}
    \dot{y}_1 = (\alpha-\delta)y_1 - \dfrac{\beta}{\delta}y_1y_2 + \dfrac{\gamma\delta}{\beta}\dfrac{y_1^2}{y_2}, \quad \dot{y}_2 = \dfrac{\gamma\delta}{\beta}y_1 - \delta y_2.
\end{equation}
We have two equations in the form of \eqref{linear_eq}, i.e., point 1 of Theorem \ref{generalization_ident} is satisfied. Let
$$r(\theta) = \left(\alpha - \delta, -\dfrac{\beta}{\delta}, \dfrac{\gamma\delta}{\beta}, \dfrac{\gamma\delta}{\beta}, -\delta\right).$$
It is easy to prove point 2 of Theorem \ref{generalization_ident}, i.e., that $r$ is injective in $\Theta$. Finally, we can check point 3 of Theorem \ref{generalization_ident}, which consists on proving that the functions of the sets $G_1 = \{y_1, y_1y_2, y_1^2/y_2\}$ and $G_2 = \{y_1, y_2\}$ are linearly independent when we consider $(\xi,\theta)^{\mathrm{T}} \in \Omega\times\Theta$.
\begin{itemize}
    \item For set $G_1$: let $a$, $b$, $c \in \mathbb{R}$ such that $ay_1 + by_1y_2 + cy_1^2/y_2 = 0.$ We can write this equation in terms of $x_1$ and $x_2$, and, given that $x_1, x_2 \not \equiv 0$, obtain
    \begin{equation*}
        a\beta + b\beta\delta x_2 + c\dfrac{\beta^2}{\delta}x_1 = 0,
    \end{equation*}
    i.e., we need to prove whether $\{1, x_1, x_2\}$ are linearly independent in $\Omega$. Proving that $a = b = c = 0$ for solutions in $\Omega$ is equivalent to prove $A = B = C = 0$ for solutions in $\Omega$ in the equation $A + Bx_1 + Cx_2 = 0$. If $C = 0$, then $x_1$ must be constant, but we have eliminated all solutions in $\Omega$ such that $x_1$ is constant. Hence, let $C \neq 0$. Then, we can rewrite $x_2 = \tilde{A} + \tilde{B}x_1$, and we have
    \begin{align*}
    \dot{x}_2 = \tilde{B}\dot{x}_1 = \tilde{B}x_1 \left(\alpha -\beta(\tilde{A} + \tilde{B}x_1)\right) \quad \text{and} \quad \dot{x}_2 = (\tilde{A} + \tilde{B}x_1) (\gamma x_1 -  \delta).
    \end{align*}
    If we equal both expressions, we obtain that
    $$(\tilde{B}\gamma - \tilde{B}^2\beta)x_1^2 + (\tilde{A}\gamma - \tilde{B}\delta - \tilde{B}\alpha + \tilde{A}\tilde{B}\beta)x_1 - \tilde{A}\delta = 0.$$
    This can only happen if $x_1$ is constant, which can not occur in $\Omega$, or the coefficients are null, which is easy to see that it implies that $\tilde{A} = \tilde{B} = 0$, and hence $A = B = C = 0$.
    \item For set $G_2$: let $a, b \in \RR$ such that $ay_1 + by_2 = 0$. This equation in terms of $x_1$ and $x_2$ is
    $$a\beta x_1x_2 + b\delta x_2 = 0,$$
    i.e., we need to prove whether $\{x_1x_2, x_2\}$ are linearly independent in $\Omega$, which is straightforward, since $x_2 \not \equiv 0$ implies $a\beta x_1 + b\delta = 0$. This implies that $x_1$ is constant, which cannot happen in $\Omega$, or $a = b = 0$.
\end{itemize}

Then, the hypotheses of Theorem \ref{generalization_ident} are satisfied and, hence, we can identify $\alpha$, $\beta$, $\gamma$ and $\delta$. In particular, we just have to select different $t_{1,1}, t_{1,2}, t_{1,3} \geq 0$ and $t_{2,1}, t_{2,2} \geq 0$ such that we can solve univocally the following linear systems:
$$\begin{array}{rcl}
    \begin{pmatrix}
        y_1(t_{1,1}) & \psi_1(t_{1,1}) & \psi_2(t_{1,1}) \\[1em]
        y_1(t_{1,2}) & \psi_1(t_{1,2}) & \psi_2(t_{1,2}) \\[1em]
        y_1(t_{1,3}) & \psi_1(t_{1,3}) & \psi_2(t_{1,3})
    \end{pmatrix}\begin{pmatrix}
        \sigma_{1,1} \\
        \sigma_{1,2} \\
        \sigma_{1,3}
    \end{pmatrix} &=& \begin{pmatrix}
        \dot{y}_1(t_{1,1}) \\
        \dot{y}_1(t_{1,2}) \\
        \dot{y}_1(t_{1,3})
    \end{pmatrix}, \\[4em]
    \begin{pmatrix}
        y_1(t_{2,1}) & y_2(t_{2,1}) \\
        y_1(t_{2,2}) & y_2(t_{2,2})
    \end{pmatrix}\begin{pmatrix}
        \sigma_{2,1} \\
        \sigma_{2,2}
    \end{pmatrix} &=& \begin{pmatrix}
        \dot{y}_2(t_{2,1}) \\
        \dot{y}_2(t_{2,2})
    \end{pmatrix},
\end{array}$$
where $\psi_1(t_{1,l}) = y_1(t_{1,l})y_2(t_{1,l})$ and $\psi_2(t_{1,l}) = \dfrac{y_1^2(t_{1,l})}{y_2(t_{1,l})}$, $l \in \{1,2,3\}$. Then, $r(\alpha, \beta, \gamma, \delta)=(\sigma_1, \sigma_2)$ implies
$$\delta = -\sigma_{2,2}, \quad \alpha = \sigma_{1,1} + \delta, \quad \beta = -\delta\sigma_{1,2}, \quad \gamma = \dfrac{\beta}{\gamma}\sigma_{1,3} = \dfrac{\beta}{\gamma}\sigma_{2,1},$$
i.e., the parameters are determined using only one derivative of each observation.

Analogously to previous examples, this could have been done considering observations in any $[a,b) \subset [0,\infty)$.

\section{Conclusions}

In this work, we have proposed a new methodology to study the identifiability and observability of a class of autonomous dynamical systems. Moreover, we provide different constructive procedures to recover the unknowns, depending on the analyticity of the system and the observations. In particular, we have shown on several cases that this method may require less derivation of the output compared to other strategies which require higher differentiation of the observation variables.

\section*{Acknowledgments}                             
Partially supported by the Spanish Government under projects PID2019-106337GB-I00, PID2023-146754NB-I00, the European M-ERA.Net under project PCI2024-153478, and the French National Research Agency (ANR) under project ANR NOCIME (New Observation and Control Issues Motivated by Epidemiology) 2024-27. The author A.B. Kubik has been also supported by an FPU predoctoral grant and a mobility grant, both of the Ministry of Universities of the Spanish Government.


\bibliographystyle{ieeetr}
\bibliography{mybibfile}

\begin{thebibliography}{10}

\bibitem{Gauthier1992}
J.~Gauthier, H.~Hammouri, and S.~Othman, ``{A Simple Observer for Nonlinear
  Systems -- Applications to Bioreactors},'' {\em IEEE Transactions on
  Automatic Control}, vol.~37, no.~6, pp.~875--880, 1992.

\bibitem{Goodwin1977}
G.~Goodwin and R.~Payne, {\em {Dynamic System Identification: Experiment Design
  and Data Analysis}}.
\newblock Academic Press, 1977.

\bibitem{Kalman1960}
R.~Kalman, ``A {N}ew {A}pproach to {L}inear {F}iltering and {P}rediction
  {P}roblems,'' {\em Journal of Basic Engineering}, vol.~82, no.~1, pp.~35--45,
  1960.

\bibitem{Luenberger1966}
D.~Luenberger, ``Observers for {Multivariable Systems},'' {\em IEEE
  Transactions on Automatic Control}, vol.~11, no.~2, pp.~190--197, 1966.

\bibitem{Pronzato1997}
E.~Walter and L.~Pronzato, {\em Identification of parametric models from
  experimental data}.
\newblock Springer, 1997.

\bibitem{Rapaport2024}
N.~Cunniffe, F.~Hamelin, A.~Iggidr, A.~Rapaport, and G.~Sallet, {\em
  Observability, {I}dentifiability and {E}pidemiology -- {A} survey}.
\newblock Springer, 2024.

\bibitem{Rapaport2023}
F.~Hamelin, A.~Iggidr, A.~Rapaport, G.~Sallet, and M.~Souza, ``About the
  identifiability and observability of the {SIR} epidemic model with
  quarantine,'' {\em IFAC-OnLinePapers}, vol.~56, no.~2, pp.~4025--4030, 2023.

\bibitem{Jain2019}
R.~Jain, S.~Narasimhan, and N.~Bhatt, ``A priori parameter identifiability in
  models with non-rational functions,'' {\em Automatica}, vol.~109, p.~108513,
  2019.

\bibitem{Ovchinnikov2021}
A.~Ovchinnikov, A.~Pillay, G.~Pogudin, and T.~Scanlon, ``Computing all
  identifiable functions of parameters for {ODE} models,'' {\em Systems \&
  Control Letters}, vol.~157, p.~105030, 2021.

\bibitem{Saccomani2003}
M.~Saccomani, S.~Audoly, and L.~D’Angiò, ``Parameter identifiability of
  nonlinear systems: the role of initial conditions,'' {\em Automatica},
  vol.~39, pp.~619--632, 2003.

\bibitem{Gevers2016}
M.~Gevers, A.~Bazanella, D.~Coutinho, and S.~Dasgupta, ``Identifiability and
  excitation of linearly parametrized rational systems,'' {\em Automatica},
  vol.~63, pp.~38--46, 2016.

\bibitem{Ljung1990}
S.~Glad and L.~Ljung, ``Parametrization of nonlinear model structures as linear
  regressions,'' {\em IFAC Proceedings Volumes}, vol.~23, no.~8/3,
  pp.~317--321, 1990.

\bibitem{Ljung1994}
S.~Glad and L.~Ljung, ``On global identifiability for arbitrary model
  parametrizations,'' {\em Automatica}, vol.~30, no.~2, pp.~265--276, 1994.

\bibitem{Bocher1900}
M.~Bocher, ``The {Theory of Linear Dependence},'' {\em Annals of Mathematics},
  vol.~2, no.~1/4, pp.~81--96, 1900.

\bibitem{ORC}
R.~Hermann and A.~Krener, ``Nonlinear {C}ontrollability and {O}bservability,''
  {\em IEEE Transactions on Automatic Control}, vol.~22, no.~5, pp.~728--740,
  1977.

\bibitem{Rudin1976}
W.~Rudin, {\em Principle of {M}athematical {A}nalysis (3rd {E}dition)}.
\newblock McGraw Hill, 1976.

\bibitem{Holmsen2024}
S.~Holmsen, S.~Eidnes, and S.~Reimer-S{\o}rensen, ``Pseudo-{H}amiltonian system
  identification,'' {\em Journal of Computational Dynamics}, vol.~11, no.~1,
  pp.~59--91, 2024.

\bibitem{Henon1964}
M.~Hénon and C.~Heiles, ``The applicability of the third integral of motion:
  {S}ome numerical experiments,'' {\em Astronomical Journal}, vol.~69, p.~73,
  1964.

\end{thebibliography}


\end{document}